\newtheorem{theorem}{Theorem}[section]
\newtheorem{lemma}[theorem]{Lemma}
\newtheorem{proposition}[theorem]{Proposition}
\newtheorem{corollary}[theorem]{Corollary}
\theoremstyle{definition}
\newtheorem{definition}[theorem]{Definition}
\begin{document}

\title{Uncountably many quasi-isometric torsion-free groups}
\author{Vladimir Vankov}
\address{School of Mathematics, University of Bristol, Bristol, UK and Heilbronn Institute for Mathematical Research, Bristol, UK}
\email{vlad.vankov@bristol.ac.uk}
\subjclass[2020]{20F65; 20J05, 51F30} 
\keywords{Quasi-isometry, Torsion-free Groups, Uncountable, Group Cohomology}

\begin{abstract}
We construct uncountably many finitely generated, pairwise non-isomorphic torsion-free groups, all of which fall into the same quasi-isometry class.
This is done by considering Schur covering groups and group cohomology, with the necessary geometric ingredient coming from the theory of bounded-valued cohomology.
\end{abstract}

\maketitle

\tableofcontents

\section{Introduction}

A central theme within Geometric Group Theory is the study of coarse geometry.
\emph{Quasi-isometries} provide a natural framework for working with large-scale geometric features of finitely generated groups.
Much attention has been given to studying the possible nature of quasi-isometry classes. With regards to large sets, the first \emph{uncountable} family of different quasi-isometry classes of finitely generated groups was constructed by Grigorchuk \cite{grigorchuk}. This used periodic groups and hence contained torsion; the first such family consisting of torsion-free groups was built by Bowditch a decade later \cite{bowditch}.

More recently, uncountable families of quasi-isometrically distinguished finitely generated groups have been exhibited with an abundance of other properties, including: solvable groups \cite{cornuliertessera}, groups containing expanders (\cite{hume}, \cite{grubersisto}), groups with exotic finiteness properties \cite{krophollerlearysoroko}, groups admitting interesting acylindrical actions (\cite{abbotthume}, \cite{abbotthume2}), residually finite groups \cite{chongwise}, and groups with various combinations of several different kinds of dimension \cite{martinezpedrozasanchezsaldana}.

In addition, Minasyan, Osin and Witzel provided set-theoretic tools to enable the construction of uncountable sets of quasi-isometrically distinguished groups in general \cite{minasyanosinwitzel}.

The above results address the question of distinguishing large sets of \emph{different} quasi-isometry classes.
On the other hand, the (dual) question of studying \emph{one} large class has been relatively unexplored in the literature.
Erschler constructed uncountably many quasi-isometric but pairwise non-isomorphic finitely generated groups, showing that a single quasi-isometry class can itself be uncountable \cite{erschler}.
However, these groups are all virtually isomorphic (they contain finite normal subgroups such that the corresponding quotients have isomorphic finite-index subgroups).
This means that their quasi-isometries come from the fact that they are commensurable up to finite kernels (isomorphic to \(C_2\) in this case), and they contain lots of torsion due to being periodic.
A similar construction was also considered in \cite{weigelzaleskii}, where the finite kernels were taken to be \(C_p\) instead.

It is more difficult to construct quasi-isometries without relying on torsion or commensurability and finite-index subgroups.
Indeed, various classes of groups admit what is known as \textit{quasi-isometric rigidity}.
Any finitely generated groups quasi-isometric to \(\mathbb{Z}^n\) must contain \(\mathbb{Z}^n\) as a finite-index subgroup (\cite{pansu}, \cite{cornuliertesseravalette}).
Many right-angled Artin groups \(G\) have the property that any group \(H\) quasi-isometric to \(G\) must actually be commensurable to \(G\) \cite{huang}.
A stronger version of this is known for certain sub-families of right-angled Coxeter groups \cite{boundsxie} and solvable groups \cite{bourdonremy}\cite{dymarzfisherxie}, where two members of the family are quasi-isometric if and only if they are isomorphic.
Further similar instances, including different kinds of rigidity, are surveyed in \cite{hughesmartinezpedrozasanchezsaldana}.

Any quasi-isometry class of groups contains groups with torsion: for any group \(G\) within a quasi-isometry class, the group \(G\times H\) is also in the same quasi-isometry class for any finite group \(H\).
Nonetheless, one can ask how many finitely generated \emph{torsion-free} groups a quasi-isometry class contains (up to group isomorphism).
The aim of the present work is to exhibit the existence of a class for which there are many.

\begin{theorem}\label{thm:largeclass}
    There exist uncountably many finitely generated pairwise non-isomorphic torsion-free groups which are all quasi-isometric.
\end{theorem}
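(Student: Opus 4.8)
The plan is to realise all the groups as central extensions of a single, cleverly chosen base group $G$, and to extract the uncountability from the second cohomology. The key observation is that although $H^2(G;\Z)$ is countable whenever $G$ is finitely presented, a finitely generated but \emph{not} finitely presented $G$ can have a Schur multiplier $M := H_2(G;\Z)$ of infinite rank, and then the universal-coefficients ``free part'' $\Hom(M,\Z) \cong \prod_{\mathbb{N}}\Z$ of $H^2(G;\Z)$ is uncountable. So the first step is to fix a finitely generated, torsion-free, \emph{centreless} group $G$ whose multiplier $M$ is free abelian of countably infinite rank (and whose universal central class is geometrically tame in the sense below). Each homomorphism $f \in \Hom(M,\Z)$ will produce a central extension, and a continuum of these will be the desired family.

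Concretely, I would form the Schur cover $1 \to M \to \hat{G} \to G \to 1$. Since this is a stem extension we have $M \subseteq [\hat G,\hat G]$, from which one checks that $\hat G$ is generated by lifts of a finite generating set of $G$ alone; hence $\hat G$ is finitely generated despite $M$ being infinitely generated. For each nonzero $f\colon M \to \Z$ I set $E_f := \hat G / \ker f$, giving a central extension $1 \to \Z \to E_f \to G \to 1$ whose Euler class is the pushforward $f_\ast u$ of the universal class $u$. Each $E_f$ is finitely generated (a quotient of $\hat G$), and it is automatically torsion-free: in any extension of a torsion-free group by a torsion-free one, a torsion element would map to a torsion element of the quotient, hence lie in the kernel, hence be trivial. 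Restricting to $f$ with entries in $\{-1,0,1\}$ already yields continuum-many such extensions.

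The heart of the argument, and the step I expect to be the main obstacle, is showing that all the $E_f$ lie in one quasi-isometry class; this is where the bounded-valued cohomology enters. A central $\Z$-extension with a \emph{bounded} $2$-cocycle $c\colon G\times G \to \Z$ is quasi-isometric to $G\times\Z$, since along the generating set the drift of the $\Z$-coordinate per step is then uniformly controlled. Representing the universal class $u$ by a cocycle $u\colon G\times G \to M$ that is bounded in the $\ell^1$-norm on $M$, one gets $|f_\ast u(g,h)| = |f(u(g,h))| \le \|f\|_\infty\,\|u(g,h)\|_1$, so every pushforward cocycle is bounded, \emph{uniformly} over all $f$ of bounded norm. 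The difficulty is twofold: producing such a bounded-valued representative of the universal central class of $G$ (this is the genuine geometric input, and dictates the choice of $G$), and upgrading uniform boundedness of the cocycles into honest quasi-isometries $E_f \simeq G\times\Z$ with controlled constants, without the finite-presentation hypothesis of the classical Gersten criterion.

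Finally I would prove that the family contains uncountably many isomorphism types by a counting argument. Because $G$ is centreless, the central $\Z \subseteq E_f$ equals $Z(E_f)$ and is therefore characteristic, so any isomorphism $E_f \xrightarrow{\sim} E_{f'}$ descends to an automorphism $\phi$ of $G$ and relates the Euler classes by $f_\ast u = \pm\,\phi^\ast(f'_\ast u)$ in $H^2(G;\Z)$. Since $f \mapsto f_\ast u$ is injective on $\Hom(M,\Z)$ (it splits the universal-coefficients surjection), distinct $f$ give distinct classes; and since $G$ is finitely generated its automorphism group is countable (an automorphism is determined by the images of finitely many generators, of which there are only countably many choices). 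Hence each isomorphism class accounts for only countably many $f$ in a single orbit of $\operatorname{Aut}(G)\times\{\pm 1\}$, and the continuum of cohomology classes breaks into continuum-many isomorphism types, all quasi-isometric and torsion-free, as required.
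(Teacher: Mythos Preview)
Your blueprint matches the paper's almost step for step: build the Schur cover of a finitely generated torsion-free $G$ with free-abelian multiplier $M$ of infinite rank, observe the cover $\hat G$ is finitely generated because the extension is stem, take quotients by $\ker f$ for a continuum of $f\in\Hom(M,\Z)$, check torsion-freeness of the resulting $\Z$-extensions, and establish quasi-isometry to $G\times\Z$ via weak boundedness. The paper uses for $G$ the explicit $C'(1/7)$ group of Frigerio--Sisto and quotes their result that any class given by a bounded integer sequence is weakly bounded; you leave $G$ abstract but correctly flag this choice as the genuine geometric input.

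Two differences worth recording. For the counting, the paper does not assume $G$ centreless: it simply notes that distinguished classes give distinct kernels $\ker f\subseteq M\subseteq\hat G$, hence distinct normal subgroups of the finitely generated $\hat G$, and then invokes the elementary fact that a finitely generated group has only countably many normal subgroups yielding a fixed isomorphism type of quotient. Your route via $Z(E_f)=\Z$ being characteristic plus countability of $\operatorname{Aut}(G)$ is a legitimate alternative, but it adds a hypothesis on $G$ that the paper's argument avoids. For the quasi-isometry, your proposed mechanism --- an $\ell^1$-bounded cocycle representing the universal class in $H^2(G,M)$ --- would force every pushforward $f_*u$ with $\|f\|_\infty<\infty$ to be \emph{bounded}, not just weakly bounded. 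That is strictly stronger than what is needed, and since the whole point of the Frigerio--Sisto construction is to separate bounded from weakly bounded, this uniform approach is unlikely to go through for the natural candidate $G$. The paper sidesteps the issue by certifying weak boundedness class-by-class from the cited result rather than through a single universal bounded cocycle.
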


In fact, we provide a method to construct lots more such new examples.
The idea is to construct central extensions with properties controlled by group cohomology.
The cohomological conditions of \emph{distinguished}, \emph{rich} and \emph{weakly bounded} will be defined in the later sections.
Their purposes are to ensure sufficiently many extensions, the ability to tell apart isomorphism classes and to establish the quasi-isometries, respectively.

\begin{theorem}\label{thm:manyclasses}
    Suppose that a finitely generated group \(G\) has free abelian abelianisation.
    If \(H^2(G,\mathbb{Z})\) contains uncountably many distinguished classes which are both rich and weakly bounded, then among the corresponding central extensions of \(G\) by \(\mathbb{Z}\), there are uncountably many groups which are quasi-isometric but pairwise not isomorphic.
\end{theorem}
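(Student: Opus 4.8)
The plan is to realise each hypothesised class as a central extension and then to verify, one at a time, that the resulting groups lie in a single quasi-isometry class, are pairwise non-isomorphic, and are uncountable in number, using the three named properties respectively. For each distinguished class \(\alpha \in H^2(G,\mathbb{Z})\) I would fix the corresponding central extension
\[
1 \longrightarrow \mathbb{Z} \longrightarrow E_\alpha \longrightarrow G \longrightarrow 1,
\]
noting that \(E_\alpha\) is finitely generated since \(G\) is and the kernel is central. The quasi-isometries are where the \emph{weakly bounded} hypothesis enters, via the geometric input from bounded-valued cohomology: a central \(\mathbb{Z}\)-extension whose class is suitably bounded is coarsely indistinguishable from the trivial extension, because a bounded representing cocycle upgrades the tautological bijection \(E_\alpha \to G \times \mathbb{Z}\) to a genuine quasi-isometry. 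As the target \(G \times \mathbb{Z}\) does not depend on \(\alpha\), all of the \(E_\alpha\) then lie in one quasi-isometry class.

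Next I would determine when two such groups are isomorphic. The subtlety is that an abstract isomorphism \(f \colon E_\alpha \to E_\beta\) need not respect the extension data, so a priori it carries no information about \(\alpha\) and \(\beta\). This is precisely what the \emph{distinguished} hypothesis is meant to repair: it makes the central \(\mathbb{Z}\) intrinsic to the group (characteristic, or otherwise canonically recoverable), so that \(f\) must send the central subgroup of \(E_\alpha\) onto that of \(E_\beta\). Then \(f\) descends to an isomorphism of short exact sequences, hence to an automorphism \(\varphi\) of \(G\) together with a sign \(\varepsilon \in \{\pm 1\}\) on the fibre, and comparing extension classes yields \(\beta = \varepsilon\,\varphi^{*}\alpha\). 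Thus \(E_\alpha \cong E_\beta\) forces \(\beta\) to lie in the orbit of \(\alpha\) under the natural action of \(\mathrm{Aut}(G) \times \{\pm 1\}\) on \(H^2(G,\mathbb{Z})\), with inner automorphisms acting trivially.

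It remains to arrange uncountably many such groups. Here the \emph{rich} hypothesis controls the orbits, ensuring that the set of \(\beta\) with \(\beta = \varepsilon\,\varphi^{*}\alpha\) is countable for each fixed \(\alpha\). Starting from an uncountable collection of distinguished classes, partitioned by this relation into countable blocks, there must be uncountably many blocks; selecting one class from each produces an uncountable family \(\{E_\alpha\}\) that is pairwise non-isomorphic by the previous paragraph and confined to one quasi-isometry class by the one before it. The standing assumption that the abelianisation is free abelian should feed in through the definitions of these properties and through a universal-coefficients identification of \(H^2(G,\mathbb{Z})\), and is what ultimately lets one also guarantee torsion-freeness in the application to Theorem~\ref{thm:largeclass}.

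I expect two genuine obstacles. The first is making the bounded-cohomology quasi-isometry precise: since \emph{weakly bounded} is presumably a relaxation of full boundedness of the extension class, one must check that it still yields an honest quasi-isometry and not merely a coarse map. The second, and harder, is the descent argument---extracting the diagram of short exact sequences from a bare group isomorphism is where the entire content of \emph{distinguished} is consumed, and it is the step most likely to require delicate hypotheses on how the central \(\mathbb{Z}\) sits inside \(E_\alpha\).
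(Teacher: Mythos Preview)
Your proposal misreads the two key definitions and, as a result, takes a route the paper does not. In the paper, \emph{rich} means that the Kronecker pairing map $f_\alpha\colon H_2(G,\mathbb{Z})\to\mathbb{Z}$ associated to $\alpha$ is surjective, and two classes are \emph{distinguished} when their Kronecker pairing maps have different kernels. Neither condition says anything about the central $\mathbb{Z}$ being characteristic in $E_\alpha$, nor about $\mathrm{Aut}(G)$-orbits; your guesses at what these terms provide are not what they actually provide.

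The paper's argument sidesteps entirely the descent problem you identify as the hard step. Because $G^{\mathrm{ab}}$ is free abelian, $G$ has a unique Schur covering group $E$: a stem central extension of $G$ by $H_2(G,\mathbb{Z})$, hence finitely generated since $G$ is. For each class $\alpha$ the pushout of the Schur cover along $f_\alpha$ yields a map $\phi_{f_\alpha}\colon E\to E_\alpha$ sitting in a morphism of short exact sequences. Richness (surjectivity of $f_\alpha$) and the four-lemma make $\phi_{f_\alpha}$ surjective, so every $E_\alpha$ is a \emph{quotient of the single finitely generated group} $E$, with $\ker\phi_{f_\alpha}$ equal to the image of $\ker f_\alpha$. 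Distinguished classes therefore give pairwise distinct normal subgroups of $E$. One now invokes the elementary counting fact that a finitely generated group admits only countably many surjections onto any fixed countable group, so uncountably many distinct normal subgroups force uncountably many isomorphism types among the quotients. The weakly bounded hypothesis then places all of them in the quasi-isometry class of $G\times\mathbb{Z}$, as you correctly describe.

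So the genuine gap is this: your plan hinges on promoting an abstract isomorphism $E_\alpha\cong E_\beta$ to a morphism of extensions, and you propose that \emph{distinguished} supplies that promotion. It does not, and the paper never attempts such a promotion; instead it replaces the problem of comparing the $E_\alpha$ to each other with the problem of comparing them all to a common finitely generated cover.
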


Furthermore, these groups will all be quasi-isometric to \(G\times\mathbb{Z}\) in particular.
A source of such groups \(G\) comes from small-cancellation groups which are not finitely presented, and the particular group used here for Theorem \ref{thm:largeclass} will come from this category.

In the spirit of the results in the literature featuring uncountably many different quasi-isometry classes of groups with interesting properties, we would like to initiate the study of large quasi-isometry classes of groups with some fixed property.
For which properties \(\mathcal{P}\) do there exist uncountably many quasi-isometric groups all having \(\mathcal{P}\)?
One property of particular interest is that of being \emph{almost finitely presented}, also known as being of type \(FP_2\), characterised by having a finitely generated relation module.
Unlike finitely presented groups, of which there are only countably many, there are uncountably many groups of type \(FP_2\).

\medskip

\textbf{Question}: Does there exist an uncountable set of pairwise non-isomorphic groups of type \(FP_2\) which are all quasi-isometric?

\medskip

The strategy behind Theorem \ref{thm:manyclasses} relies on uncountably many cohomology classes in \(H^2(G,\mathbb{Z})\).
Since groups of type \(FP_2\) must necessarily have finitely generated second cohomology (\cite{brown}, Chapter VIII), this does not apply to the question.

Note that any property \(\mathcal{P'}\) for which there can be at most countably many groups satisfying it, will necessarily not be quasi-isometry invariant among groups with \(\mathcal{P}\), if an appropriate uncountable quasi-isometry class of groups with \(\mathcal{P}\) were to exist.
This does not necessarily have to be true if uncountably many groups with \(\mathcal{P}\) exist.
For example, it is not known whether the word problem is a quasi-isometry invariant among groups of type \(FP_2\).

\medskip

\textbf{Outline.}

Section 2 contains the necessary background on quasi-isometries, central extensions and the relationship between them through \(\ell^{\infty}\)-cohomology. Section 3 studies Schur covering groups and contains the proof of Theorem \ref{thm:manyclasses}. Section 4 exhibits a particular finitely generated group with the correct properties to deduce the statement of Theorem \ref{thm:largeclass}.

\medskip

\textbf{Acknowledgements.}

The study of large quasi-isometry classes was first brought to the author's attention after a conversation between Peter Kropholler and Ian Leary, following the release of \cite{krophollerlearysoroko}.
This work was supported by the Additional Funding Programme for Mathematical Sciences, delivered by EPSRC (EP/V521917/1) and the Heilbronn Institute for Mathematical Research.

\section{Central extensions and quasi-isometries}

\begin{definition}[Quasi-isometry]
    Let $(X,d_1)$ and $(Y,d_2)$ be metric spaces.
    A function $f:X\to Y$ is called a \emph{quasi-isometric embedding} if
    \[
    \exists C\geqslant 1, K\geqslant 0 : \forall a,b\in X, \ \frac{1}{C}d_1(a,b)-K\leqslant d_2\left(f(a),f(b)\right)\leqslant Cd_1(a,b)+K.
    \]
    If in addition $f$ is also a \emph{coarse surjection}, i.e. if
    \[
    \exists \epsilon\geqslant 0 : \forall y\in Y\ \exists x\in X : d_2(f(x),y)\leqslant \epsilon,
    \]
    then $f$ is called a \emph{quasi-isometry}.
    If such an $f$ exists, the spaces $X$ and $Y$ are called \emph{quasi-isometric.}
    This gives an equivalence relation on metric spaces.
\end{definition}

If $G$ is a finitely generated group, then given some finite generating set $S$ we can build the Cayley graph of $G$ and consider the corresponding word metric.
The choice of $S$ does not matter for large-scale geometry, i.e. any such two Cayley graphs will be quasi-isometric.
We say that two finitely generated groups $G$ and $H$ are quasi-isometric if their Cayley graphs are.
This gives an equivalence relation on finitely generated groups.

\begin{definition}[Extension]
    An \emph{extension group} of a group \(G\) is a group \(E\) equipped with a surjective homomorphism \(\phi:E\to G\).
    If we denote the kernel of this map by \(K\), then we have a short exact sequence
    \begin{center}
    \begin{tikzcd}
    1 \arrow[r] & K \arrow[r] & E \arrow[r,"\phi"] & G \arrow[r] & 1.
    \end{tikzcd}
    \end{center}
    We refer to this sequence as the \emph{extension} of \(G\) by \(K\) (sometimes called a \emph{group extension}).
\end{definition}

\begin{definition}[Central and stem extensions]
    Denote the centre of a group \(G\) by \(Z(G)\).
    If in the above, \(K\subseteq Z(E)\), then we call the corresponding extension a \emph{central extension} of \(G\) by \(K\).

    Denote by \([G,G]\) the derived subgroup of \(G\) (sometimes called the \emph{commutator subgroup}).
    If in the above, we have \(K\subseteq Z(E)\cap[E,E]\), then we call the corresponding extension a \emph{stem} extension.
\end{definition}

Recall that there is a correspondence between central extensions of \(G\) by abelian groups \(A\) and \(H^2(G,A)\), with \(A\) viewed as a trivial \(G\)-module (trivial action of \(G\) on \(A\)) (\cite{brown}, Chapter IV). More concretely, the cohomology classes in \(H^2(G,A)\) classify extension groups up to the equivalence \(E\sim E'\) given by isomorphisms \(\phi\) making the diagram

\begin{equation}\label{eq:equiv}
\begin{tikzcd}
    & & E \arrow[dr]\arrow[dd,"\phi"]& &\\
    1\arrow[r] & A\arrow[ur]\arrow[dr] & & G\arrow[r] & 1\\
    & & E'\arrow[ur] & & \\
\end{tikzcd}
\end{equation}

commute.
Note that given two different elements of \(H^2(G,A)\), the two different classes of central extension do not necessarily have non-isomorphic extension groups.
This is primarily why more work is required in the next section to properly distinguish uncountably many groups for our purposes.

Next, we turn our attention to establishing quasi-isometries between torsion-free groups.

It was noticed by Gersten that an extension group \(E\) of \(G\) by \(A\) coming from a central extension is quasi-isometric to \(G\times A\) when the corresponding cohomology class is \emph{bounded} \cite{gersten}.
This means that the class lies in the image of the comparison map from bounded cohomology
\[
H^2_b(G,A)\to H^2(G,A).
\]

However, Neumann and Reeves pointed out that the argument did not necessarily require the cohomology class to be bounded \cite{neumannreeves}.
Instead, all that was required was for the cohomology class to be \emph{weakly bounded}.
This means that the class lies in the kernel of the natural map to \(\ell^{\infty}\)-cohomology
\[
H^2(G,A)\to H^2_{\infty}(G,A).
\]

This was clarified by Frigerio and Sisto in \cite{frigeriosisto}, where Proposition 1.11 made the characterisation of weakly bounded classes in terms of \(\ell^{\infty}\)-cohomology concrete.
They also exhibited the first example of a finitely generated group with weakly bounded classes which were not bounded.
A finitely presented example was later given in \cite{ascarimilizia}.

We shall not develop the theories of \(\ell^{\infty}\)-cohomology (sometimes also known as \emph{bounded-valued} cohomology) and bounded cohomology further here, as the main tool that we will require is the following proposition.

\begin{proposition}[Quasi-isometries via bounded-valued cohomology]\label{prop:qi}
    A central extension group \(E\) of \(G\) by \(\mathbb{Z}\) is quasi-isometric to \(G\times \mathbb{Z}\) if and only if the corresponding cohomology class in \(H^2(G,\mathbb{Z})\) is weakly bounded. 
\end{proposition}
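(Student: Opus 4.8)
The plan is to put both groups on the common underlying set \(G\times\mathbb{Z}\) and to compare their word metrics directly through the identity bijection, whose coarse surjectivity is then automatic. First I would fix an inhomogeneous \(2\)-cocycle \(c\colon G\times G\to\mathbb{Z}\) representing the given class, so that \(E\) becomes \(G\times\mathbb{Z}\) with the twisted multiplication \((g,m)(h,n)=(gh,\,m+n+c(g,h))\), while \(G\times\mathbb{Z}\) is the same set with \(c\equiv 0\). Fixing a finite generating set \(S\) of \(G\), the lifts \((s,0)\) for \(s\in S\) together with the central generator \(t=(1,1)\) generate \(E\). The key computation is a distance formula: a word \(w=s_1\cdots s_\ell\) representing \(g\in G\), when read as a product of lifts, lands at \((g,\sigma(w))\) in \(E\), where \(\sigma(w)=\sum_{j}c(s_1\cdots s_{j-1},\,s_j)\) is the accumulated holonomy of the cocycle along the path; correcting the second coordinate then costs \(\lvert n-\sigma(w)\rvert\) applications of \(t\). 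This yields \(d_E(1,(g,n))\asymp\min_w\bigl(\ell(w)+\lvert n-\sigma(w)\rvert\bigr)\) over words \(w\) representing \(g\), whereas \(d_{G\times\mathbb{Z}}(1,(g,n))=d_G(1,g)+\lvert n\rvert\). The special case \(g=1\) already shows that the central \(\mathbb{Z}\) is undistorted in \(E\) exactly when short loops cannot accumulate large holonomy.

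Comparing the two formulas, the identity map is a quasi-isometry precisely when the holonomy is controlled linearly in word length, i.e. \(\lvert\sigma(w)\rvert\leqslant A\,\ell(w)\) for a uniform constant \(A\); a short case analysis shows this bound gives both the upper and the lower quasi-isometry estimates. Next I would recognise this geometric condition as the vanishing of the \(\ell^{\infty}\)-cohomology obstruction. Invoking Frigerio--Sisto (Proposition 1.11 of \cite{frigeriosisto}), weak boundedness of \([c]\) is equivalent to the existence of a bounded primitive \(u\) for the lift of \(c\) to the universal Cayley complex, that is a bounded \(1\)-cochain with \(\delta u\) equal to the lifted cocycle; pairing \(u\) with a filling disc of a loop \(\gamma\) bounds its holonomy by \(\|u\|_{\infty}\,\ell(\gamma)\), which is exactly the linear control above. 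Such a \(u\) lets me absorb the holonomy into a bounded correction term, so that the section \(g\mapsto(g,u(g))\) becomes a quasi-isometric section and the two metrics agree up to an additive constant. This establishes the sufficiency direction, which is Neumann--Reeves's refinement of Gersten's bounded-class argument in \cite{gersten} and \cite{neumannreeves}.

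For necessity I would argue that this obstruction is genuinely a quasi-isometry invariant, rather than merely an invariant of the identity map: Frigerio--Sisto upgrade the sufficiency to an exact equivalence by showing that any quasi-isometry from \(E\) to \(G\times\mathbb{Z}\) forces the loop holonomies to be linearly bounded, hence the lifted cocycle to admit a bounded primitive, placing \([c]\) in the kernel of \(H^2(G,\mathbb{Z})\to H^2_{\infty}(G,\mathbb{Z})\). I expect the main obstacle to lie precisely here, in the careful handling of the coarse, non-equivariant \(\ell^{\infty}\)-theory without any finiteness hypothesis on \(G\): the primitive \(u\) is bounded but not \(G\)-invariant, so one cannot reduce to ordinary bounded group cohomology, and one must control holonomy over arbitrary words rather than geodesics. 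Verifying that the untwisting map is a bona fide quasi-isometry, and that undistortion of the central fibre is both necessary and sufficient, is where the real work sits; since this machinery is exactly what is assembled in \cite{neumannreeves} and \cite{frigeriosisto}, I would cite it rather than reprove it.
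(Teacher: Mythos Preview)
Your sketch is correct and recovers the content of the references involved; however, the paper does not actually prove this proposition---its entire proof reads ``Implicit in \cite{gersten}, \cite{neumannreeves}. Also appears as Theorem 1.8 in \cite{kleinerleeb}.'' Your proposal is therefore far more detailed than the paper's treatment, which imports the result as a black box from the literature. The one citation you do not invoke is Kleiner--Leeb, which the paper uses for the full if-and-only-if statement (covering in particular the necessity direction that you rightly flag as the subtler half).
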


\begin{proof}
    Implicit in \cite{gersten}, \cite{neumannreeves}. Also appears as Theorem 1.8 in \cite{kleinerleeb}.
\end{proof}

Thus groups with many different weakly bounded classes will have many quasi-isometric central extensions.
It will be the aim of the following section to distinguish such central extensions up to group isomorphism.
The discussion of showing which classes are weakly bounded is postponed until the final section.

We end this section with a widely well-known result.
However, it is a key step in our argument and its proof is short, thus we include it here for completeness.

\begin{lemma}\label{lem:torsionfree}
    Extensions of torsion-free groups by torsion-free groups are again torsion-free.
\end{lemma}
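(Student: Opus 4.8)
The plan is to argue directly by tracking a potential torsion element through the short exact sequence
\[
1 \longrightarrow K \longrightarrow E \xrightarrow{\ \phi\ } G \longrightarrow 1,
\]
where both the kernel \(K\) and the quotient \(G\) are assumed torsion-free. I would suppose, for contradiction, that \(E\) contains a nontrivial element \(e\) of finite order, so that \(e^n = 1\) for some integer \(n \geqslant 1\) with \(e \neq 1\), and then show that both hypotheses combine to force \(e = 1\).

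The key steps are as follows. First I would push \(e\) forward to the quotient by applying the homomorphism \(\phi\): since \(\phi\) is a homomorphism, \(\phi(e)^n = \phi(e^n) = \phi(1) = 1\), so \(\phi(e)\) is a torsion element of \(G\). Because \(G\) is torsion-free, this immediately yields \(\phi(e) = 1\), which is exactly the statement that \(e \in \ker\phi = K\). Second, I would now regard \(e\) as an element of \(K\). It still satisfies \(e^n = 1\), so it is a torsion element of \(K\); since \(K\) is torsion-free, we conclude \(e = 1\). This contradicts the choice \(e \neq 1\), and hence \(E\) has no nontrivial torsion, i.e. \(E\) is torsion-free.

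There is no serious obstacle here; the argument is essentially a one-step diagram chase and uses each of the two torsion-free hypotheses exactly once. The only point requiring any care is the elementary observation that the order of \(\phi(e)\) in \(G\) divides the order of \(e\) in \(E\), so that a torsion element maps to a torsion element; this is what lets torsion-freeness of \(G\) confine \(e\) to the kernel, after which torsion-freeness of \(K\) finishes the job. I would also note that the symmetry between ``extension by \(K\)'' and ``quotient \(G\)'' is genuinely needed: dropping either hypothesis allows torsion in \(E\), so both are used essentially.
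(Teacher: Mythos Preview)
Your argument is correct and follows essentially the same route as the paper: push a putative torsion element to \(G\) via the quotient map, use torsion-freeness of \(G\) to force it into the kernel, and then use torsion-freeness of \(K\) to conclude it is trivial. The only cosmetic difference is that you frame it as a contradiction with \(e \neq 1\), whereas the paper phrases it as a direct proof that any finite-order element is the identity.
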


\begin{proof}
    Suppose that \(E\) is an extension of the group \(G\) with kernel \(K\).
    We have an exact sequence
    \begin{center}
    \begin{tikzcd}
    1 \arrow[r] & K \arrow[r,"\phi"] & E \arrow[r,"\psi"] & G \arrow[r] & 1.
    \end{tikzcd}
    \end{center}
    If \(g\in E\) has finite order, then because \(\psi\) is a homomorphism, so does \(\psi(g)\in G\).
    As \(G\) is torsion-free, this means that the order of \(\psi(g)\) must be 1, i.e. \(g\) lies in the kernel of \(\psi\).
    By exactness, there must be some \(h\in K\) such that \(\phi(h)=g\).
    Now \(\phi\) is injective, which forces \(h\) to have finite order too.
    As \(K\) is torsion-free, this means the order of \(h\) must be 1, and \(\phi\) being a homomorphism now forces \(g\) to be the identity as well.
\end{proof}

In particular, this applies to central extensions of torsion-free groups by \(\mathbb{Z}\).

\section{Quotients of Schur covering groups}

The proof of Theorem \ref{thm:manyclasses} will utilise quotients of a much larger central extension.

\begin{definition}
    The \emph{Schur multiplier} of a group \(G\) is the second homology group \(H_2(G,\mathbb{Z})\).
\end{definition}

We will denote by \(G^{\text{ab}}\) the abelianisation \(G/[G,G]\), which is the same as \(H_1(G,\mathbb{Z})\).

Recall that the Universal Coefficients Theorem provides a short exact sequence

\begin{equation}\label{eq:uct}
\begin{tikzcd}
    0 \arrow[r] & \operatorname{Ext}^1_{\mathbb{Z}}(G^{\text{ab}},A)\arrow[r] & H^2(G,A) \arrow[r] & \operatorname{Hom}(H_2(G,\mathbb{Z}),A)\arrow[r] & 0
\end{tikzcd}
\end{equation}

for any abelian group \(A\) (with trivial \(G\)-action).

For \(G\) perfect (having trivial abelianisation) or \(G^{\text{ab}}\) being isomorphic to some \(\mathbb{Z}^n\), \(\operatorname{Ext}^1_{\mathbb{Z}}(G^{\text{ab}},A)\) will vanish.
The above sequence then yields an isomorphism
\begin{equation}\label{eq:cong}
H^2(G,A) \cong \operatorname{Hom}(H_2(G,\mathbb{Z}),A).
\end{equation}

We will focus on the case when \(A=H_2(G,\mathbb{Z})\), the Schur multiplier of \(G\).

\begin{definition}[Unique Schur cover]
    We say that a group \(G\) has a \emph{unique Schur covering group} if
    \[
    \operatorname{Ext}^1_{\mathbb{Z}}(H_1(G,\mathbb{Z}),H_2(G,\mathbb{Z}))
    \]
    is trivial.
\end{definition}

For a group \(G\) with a unique Schur covering group, using the isomorphism (\ref{eq:cong}) we can associate central extensions of \(G\) by \(H_2(G,\mathbb{Z})\) (via second cohomology classes with coefficients being the Schur multiplier) with elements of \(\operatorname{Hom}(H_2(G,\mathbb{Z}),H_2(G,\mathbb{Z}))\).
We define the \emph{Schur covering group} to be the extension group \(E\) in the extension

\begin{equation}\label{eq:schur}
\begin{tikzcd}
    1 \arrow[r] & H_2(G,\mathbb{Z})\arrow[r] & E\arrow[r] & G\arrow[r] & 1
\end{tikzcd}
\end{equation}

that corresponds to the identity map \(H_2(G,\mathbb{Z})\to H_2(G,\mathbb{Z})\) under the above construction.

For the case when \(G\) is perfect, this is known as the \emph{universal central extension}.

The following lemma is one of the reasons why we want Schur coverings.

\begin{lemma}[Schur coverings are stem]\label{lem:stem}
    The particular extension (\ref{eq:schur}) is a stem extension.
\end{lemma}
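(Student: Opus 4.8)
The plan is to reduce the statement to a surjectivity property of a single homomorphism. Write \(A = H_2(G,\mathbb{Z})\) and let \(\alpha \in H^2(G,A)\) be the class defining the extension (\ref{eq:schur}). Since \(\alpha\) lives in \(H^2(G,A)\) with \(A\) a trivial \(G\)-module, the extension is automatically central, so \(A \subseteq Z(E)\) is immediate; the entire content of the lemma is the claim that \(A \subseteq [E,E]\). Equivalently, I must show that the image of \(A\) in the abelianisation \(E^{\mathrm{ab}} = H_1(E,\mathbb{Z})\) is trivial, i.e. that \(A \cap [E,E] = A\).

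To control this image I would invoke the five-term exact sequence in homology associated to the central extension, coming from the Lyndon--Hochschild--Serre spectral sequence (with trivial \(\mathbb{Z}\)-coefficients), namely
\[
H_2(E,\mathbb{Z}) \longrightarrow H_2(G,\mathbb{Z}) \xrightarrow{\ \tau\ } A \longrightarrow H_1(E,\mathbb{Z}) \longrightarrow H_1(G,\mathbb{Z}) \longrightarrow 0,
\]
where \(\tau\) is the transgression. The map \(A \to H_1(E,\mathbb{Z})\) is exactly the composite of the inclusion \(A \hookrightarrow E\) with the abelianisation quotient, so its kernel is precisely \(A \cap [E,E]\). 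By exactness this kernel equals the image of \(\tau\). Hence \(A \subseteq [E,E]\) if and only if \(\tau\) is surjective, and the problem becomes purely cohomological.

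The final step is to identify \(\tau\). The standard fact is that the transgression \(\tau\colon H_2(G,\mathbb{Z}) \to A\) of a central extension agrees, up to sign, with the homomorphism obtained by evaluating its defining class \(\alpha\), i.e. with the image of \(\alpha\) under the Universal Coefficients projection \(H^2(G,A) \to \operatorname{Hom}(H_2(G,\mathbb{Z}),A)\) of (\ref{eq:uct}). By construction the extension (\ref{eq:schur}) corresponds under (\ref{eq:cong}) to the identity map of \(A = H_2(G,\mathbb{Z})\), so \(\tau\) is (up to sign) the identity, which is visibly surjective. This forces \(A \cap [E,E] = A\), and combined with centrality yields \(A \subseteq Z(E) \cap [E,E]\), so the extension is stem.

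The main obstacle is the identification in the last step: one must make precise, and properly reference (e.g. \cite{brown}, or the standard Schur-multiplier literature), the naturality statement relating the homological transgression \(\tau\) to the cohomological evaluation map furnished by the Universal Coefficients Theorem. Once this compatibility is in hand the remainder is formal. An alternative route, avoiding the spectral sequence, would be to argue directly via Hopf's formula \(H_2(G,\mathbb{Z}) \cong (R \cap [F,F])/[F,R]\) for a presentation \(G = F/R\), tracking generators of \(A\) as commutators in \(E\); but the five-term sequence keeps the argument cleanest and most self-contained.
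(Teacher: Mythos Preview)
Your argument is correct and is essentially identical to the paper's: both invoke the five-term (Stallings) exact sequence from the Lyndon--Hochschild--Serre spectral sequence, identify the transgression \(\tau\) with the homomorphism corresponding to the extension under the Universal Coefficients map (which is the identity by construction), and deduce \(A\subseteq [E,E]\) from exactness. The only cosmetic difference is that the paper reads off the conclusion one step further along the sequence (from injectivity of \(H_1(E,\mathbb{Z})\to H_1(G,\mathbb{Z})\)) rather than directly from surjectivity of \(\tau\), and the paper simply asserts the \(\tau\)/UCT compatibility that you flag as needing a reference.
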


\begin{proof}
    The Hochschild-Serre spectral sequence associated to (\ref{eq:schur}) gives rise to a 5-term exact sequence (also known as the Stallings exact sequence \cite{stallings})
    \begin{center}
    \begin{tikzcd}
    H_2(E,\mathbb{Z})\arrow[r] & H_2(G,\mathbb{Z})\arrow[r,"\phi"] & H_2(G,\mathbb{Z})\arrow[r,"c"] & H_1(E,Z)\arrow[r,"\psi"] & H_1(G,\mathbb{Z})
    \end{tikzcd}
    \end{center}
    with the map \(\phi\) being precisely the element of \(\operatorname{Hom}(H_2(G,\mathbb{Z}),A)\), with \(A\) being the Schur multiplier of \(G\), that corresponds to the extension (\ref{eq:schur}) under the isomorphism (\ref{eq:cong}).
    By construction, this was chosen to be the identity map, therefore \(\phi\) is the identity homomorphism on \(H_2(G,\mathbb{Z})\).
    By exactness, the map \(c\) must hence be the zero map.
    This implies that the map \(\psi\) is injective.
    But the kernel of \(\psi\) is precisely
    \[
    H_2(G,\mathbb{Z})/(H_2(G,\mathbb{Z})\cap[E,E]).
    \]
    Since this must be the trivial group, it follows that \(H_2(G,\mathbb{Z})\subseteq[E,E]\), i.e. the kernel of the extension in (\ref{eq:schur}) is in both the centre and the derived subgroup of the extension group.
\end{proof}

To properly distinguish isomorphism classes of groups, we will distinguish many quotients. For this, we will require finite generation of our big extension.
This is why having a stem extension is helpful.

\begin{proposition}[Central commutators are non-generators]
For a group \(G\), let 
\[
A:=[G,G]\cap Z(G).
\]
If \(S\cup B\) generates \(G\) for some subsets \(S\subseteq G\setminus A\) and \(B\subseteq A\), then \(S\) generates \(G\).
\end{proposition}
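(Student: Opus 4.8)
The plan is to set $H := \langle S \rangle$ and prove $H = G$ by exploiting the two defining features of $A$ separately: that its elements are central, and that they lie in the derived subgroup. First I would record the consequence of centrality for generation. Since $S \cup B$ generates $G$, every $g \in G$ is a word in the elements of $S \cup B$ and their inverses. Each letter coming from $B$ (or its inverse) lies in $A \subseteq Z(G)$, hence commutes with every other letter, so we may slide all such letters to the right-hand end of the word without changing the element it represents. This rewrites $g = h \cdot a$ with $h \in \langle S \rangle = H$ and $a \in \langle B \rangle$. Consequently $G = H \langle B \rangle$, and since $\langle B \rangle \subseteq A \subseteq Z(G)$ this product is in fact a subgroup.

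The second stage is to show $\langle B \rangle \subseteq H$, which then forces $G = H$. For this I would compute the derived subgroup of $G$ using the factorisation $G = H \langle B \rangle$. Take any two elements $g_1 = h_1 a_1$ and $g_2 = h_2 a_2$ with $h_i \in H$ and $a_i \in \langle B \rangle$. Because $a_1$ and $a_2$ are central, they commute with every other factor, so expanding the commutator $[g_1, g_2]$ and cancelling the central terms yields $[g_1, g_2] = [h_1, h_2]$. Thus every generating commutator of $[G,G]$ is already a commutator of elements of $H$, giving $[G,G] = [H,H] \subseteq H$. Now I invoke the hypothesis $B \subseteq A = [G,G] \cap Z(G)$: in particular $B \subseteq [G,G] \subseteq H$, so $\langle B \rangle \subseteq H$. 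Combined with $G = H \langle B \rangle$ from the first stage, this gives $G = H = \langle S \rangle$, as required.

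The argument is short and I do not expect a genuine obstacle; the only real content is recognising that the two halves of the definition of $A$ serve two distinct purposes. Centrality is what permits both the factorisation $G = H\langle B\rangle$ and the collapse of commutators, while membership in $[G,G]$ is precisely what allows $H$ to recover the elements of $B$ through its own commutators. The condition $S \subseteq G \setminus A$ merely formalises that we are discarding the $A$-part of the generating set; the proof never uses that $S$ avoids $A$, so the proposition can be read as the statement that every element of $A = [G,G]\cap Z(G)$ is a non-generator of $G$.
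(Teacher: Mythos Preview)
Your proof is correct and follows essentially the same approach as the paper: both use centrality to push the $B$-letters aside and then observe that central factors cancel inside commutators, so $[G,G]$ is already contained in $\langle S\rangle$ and hence $B\subseteq\langle S\rangle$. Your presentation packages this as the structural identities $G=H\langle B\rangle$ and $[G,G]=[H,H]$, whereas the paper carries out the same cancellation element-by-element on an explicit commutator decomposition of a given $g\in B$; your closing remark that the hypothesis $S\subseteq G\setminus A$ is never used is also accurate.
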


\begin{proof}
We will show that every element of \(B\) may be written as a finite product of elements from \(S\).

Suppose that \(g\in B\).
Since \(B\) is contained in the derived subgroup of \(G\), we may write \(g=[a_1,b_1][a_2,b_2]\cdots[a_k,b_k]\) for some elements \(a_i,b_i\in G\). As \(S\cup B\) generates \(G\), we may write
\[
a_i=r_{i,1} r_{i,2} \cdots r_{i,m(i)} g_{i,1} g_{i,2} \cdots g_{i,p(i)},
\]
\[
b_i=s_{i,1} s_{i,2} \cdots s_{i,n(i)} h_{i,1} h_{i,2} \cdots h_{i,q(i)},
\]
for some elements \(r_{i,j},s_{i,j}\in S\) and \(g_{i,j},h_{i,j}\in B\) , where we used the fact that \(B\) is contained in the centre of \(G\) to move the \(g_{i,j}\) and \(h_{i,j}\) to the right of the expressions. Let \(r_i=\prod_{j=1}^{m(i)} r_{i,j}\) and \(s_i=\prod_{j=1}^{n(i)} s_{i,j}\). Then again using the fact that \(B\) is central, we get
\[
g=\prod_{i=1}^k[a_i,b_i]=\prod_{i=1}^k[r_i,s_i].
\]
Thus every element of \(G\) may be written as a finite product of elements from \(S\).
\end{proof}

\begin{corollary}\label{cor:fingen}
    If a group \(G\) is generated by some set \(S\), then any stem extension \(E\) of \(G\) can also be generated by a set of the same cardinality as \(S\).
\end{corollary}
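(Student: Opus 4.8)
The plan is to reduce the statement to the previous proposition on central commutators being non-generators, by lifting a generating set of \(G\) and identifying the central-commutator subgroup of \(E\) with (a subgroup containing) the kernel of the stem extension. First I would fix the stem extension
\[
1 \longrightarrow K \longrightarrow E \xrightarrow{\ \phi\ } G \longrightarrow 1,
\]
so that \(K\subseteq Z(E)\cap[E,E]\), and for each \(s\in S\) choose a preimage \(\tilde s\in\phi^{-1}(s)\), setting \(\tilde S:=\{\tilde s : s\in S\}\). Since distinct elements of \(S\) have disjoint fibres under \(\phi\), the assignment \(s\mapsto\tilde s\) is injective, so \(|\tilde S|=|S|\).

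Next I would verify that \(\tilde S\cup K\) generates \(E\). Indeed, \(\phi(\langle\tilde S\rangle)=\langle S\rangle=G\), so for every \(e\in E\) there is some \(h\in\langle\tilde S\rangle\) with \(\phi(h)=\phi(e)\); then \(eh^{-1}\in K\), whence \(e\in\langle\tilde S\rangle K\subseteq\langle\tilde S\cup K\rangle\). I would then apply the previous proposition with the group \(E\) in place of \(G\), with \(A:=[E,E]\cap Z(E)\), with central part \(B:=K\) (which is contained in \(A\) precisely because the extension is stem, by Lemma \ref{lem:stem}), and with \(\tilde S\) playing the role of \(S\). The proposition then yields that \(\tilde S\) already generates \(E\), producing a generating set of cardinality \(|S|\), as required.

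The main point requiring care is matching the hypotheses of the proposition, namely that the distinguished part of the generating set lies outside \(A\). If some lift \(\tilde s\) happens to land in \(A\), I would simply move it into \(B\); the proposition then produces a generating set of cardinality at most \(|S|\), which may be padded back up to cardinality exactly \(|S|\) if desired. Alternatively, one can bypass this bookkeeping entirely: since \(K\) is central, every commutator \([h_1k_1,h_2k_2]\) with \(h_i\in\langle\tilde S\rangle\) and \(k_i\in K\) equals \([h_1,h_2]\), so that \([E,E]=[\langle\tilde S\rangle,\langle\tilde S\rangle]\subseteq\langle\tilde S\rangle\); combined with \(K\subseteq[E,E]\) this forces \(K\subseteq\langle\tilde S\rangle\) and hence \(\langle\tilde S\rangle=E\) at once.
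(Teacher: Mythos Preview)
Your argument is correct and is exactly the intended deduction: lift \(S\) to \(\tilde S\subseteq E\), observe \(\tilde S\cup K\) generates \(E\), and invoke the preceding proposition to drop \(K\). The paper does not spell this out; it simply records the statement as an immediate corollary, and your write-up fills in precisely those details.

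Two minor remarks. First, the appeal to Lemma~\ref{lem:stem} is misplaced: that lemma asserts that the Schur covering is a stem extension, whereas here you only need that a stem extension satisfies \(K\subseteq Z(E)\cap[E,E]\), which is the \emph{definition} of stem. Second, your care about the hypothesis \(\tilde S\subseteq E\setminus A\) is appropriate given how the proposition is stated, and both of your fixes work; in fact your ``alternative'' computation \([h_1k_1,h_2k_2]=[h_1,h_2]\Rightarrow[E,E]\subseteq\langle\tilde S\rangle\) is essentially the proof of the proposition specialised to this situation, so it is not really a different route but a local re-derivation.
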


This allows us to put large non-finitely generated abelian groups into the center, while preserving finite generation.

Of course, any stem extension would be enough to ensure finite generation. Next, we look at how having a Schur covering in particular helps us to control the kind of extensions that we get as quotients.

\begin{definition}[Kronecker pairing]
    Evaluation of 2-cochains on 2-chains yields a map
    \[
    \left<\cdot,\cdot\right>\colon H^2(G,\mathbb{Z})\times H_2(G,\mathbb{Z})\to \mathbb{Z}.
    \]
    This is the same regardless of the resolution used, since the Universal Coefficients Theorem with coefficients as \(\mathbb{Z}\) in (\ref{eq:uct}) provides a map
    \begin{equation}\label{eq:kronecker}
    \phi\colon H^2(G,\mathbb{Z})\to \operatorname{Hom}(H_2(G,\mathbb{Z}),\mathbb{Z}).
    \end{equation}
    Thus any cohomology class \([a]\in H^2(G,\mathbb{Z})\) gives a homomorphism
    \[
    \phi([a])\colon H_2(G,\mathbb{Z})\to \mathbb{Z},
    \]
    which determines
    \[
    \left<[a],[b]\right>=\phi([a])([b])
    \]
    for \([b]\in H_2(G,\mathbb{Z})\).
\end{definition}

\begin{proposition}[Pushout]\label{prop:pushout}
    Suppose that \(G\) has a unique Schur covering group and \([a]\in H^2(G,\mathbb{Z})\).
    Then there is a commutative diagram
    \begin{equation}\label{eq:pushout}
    \begin{tikzcd}
        1 \arrow[r] & H_2(G,\mathbb{Z}) \arrow[r]\arrow[d,"f"] & E \arrow[r]\arrow[d,"\phi_f"] & G \arrow[r]\arrow[d,equal] & 1\\
        1 \arrow[r] & \mathbb{Z} \arrow[r] & E_f \arrow[r] & G \arrow[r] & 1,
    \end{tikzcd}
    \end{equation}
    where the top row is the unique Schur cover of \(G\) and the bottom row is a central extension of \(G\) by \(\mathbb{Z}\) corresponding to \([a]\in H^2(G,\mathbb{Z})\).
\end{proposition}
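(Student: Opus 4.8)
The plan is to build the bottom row as the \emph{pushout} (cobase change) of the Schur cover along a homomorphism of kernels determined by $[a]$. First I would use the Kronecker map $\phi$ of \eqref{eq:kronecker} to set $f := \phi([a]) \colon H_2(G,\mathbb{Z}) \to \mathbb{Z}$; this is the left-hand vertical arrow of \eqref{eq:pushout}, and it records exactly the part of $[a]$ that survives into $\operatorname{Hom}(H_2(G,\mathbb{Z}),\mathbb{Z})$ under the Universal Coefficients sequence \eqref{eq:uct}.

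Next I would perform the pushout explicitly. Writing $\iota\colon H_2(G,\mathbb{Z})\hookrightarrow E$ for the central inclusion in \eqref{eq:schur} and $\pi\colon E\to G$ for its projection, consider inside $\mathbb{Z}\times E$ the subgroup $N=\{(-f(x),\iota(x)):x\in H_2(G,\mathbb{Z})\}$, where the first coordinate is taken additively; since both $\mathbb{Z}$ and $\iota(H_2(G,\mathbb{Z}))$ are central, $N$ is a central, hence normal, subgroup, and $f$ and $\iota$ being homomorphisms makes $N$ a subgroup. Set $E_f:=(\mathbb{Z}\times E)/N$. I would then verify the routine facts by a short diagram chase: the map $\mathbb{Z}\to E_f$, $n\mapsto[(n,1)]$, is injective (as $\iota$ is injective); the assignment $[(n,e)]\mapsto \pi(e)$ is a well-defined surjection onto $G$ whose kernel is precisely the image of $\mathbb{Z}$; and this image is central. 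This realises the bottom row as a central extension of $G$ by $\mathbb{Z}$ (note that surjectivity of $f$ is \emph{not} needed for the new kernel to be exactly $\mathbb{Z}$). The middle vertical map is $\phi_f\colon E\to E_f$, $e\mapsto[(0,e)]$, and both squares commute directly from the definitions: the right square because each route sends $e$ to $\pi(e)$, and the left square because $\phi_f(\iota(x))=[(0,\iota(x))]=[(f(x),1)]$ is the image of $f(x)$ under $\mathbb{Z}\to E_f$.

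The crux is to identify the cohomology class of this bottom extension with $[a]$. Here I would invoke the standard compatibility between the central-extension/$H^2$ dictionary and change of coefficients: pushing out along $f$ corresponds to the induced map $f_*\colon H^2(G,H_2(G,\mathbb{Z}))\to H^2(G,\mathbb{Z})$, so the class of $E_f$ is $f_*$ applied to the Schur class. By naturality of \eqref{eq:uct} in the coefficient group, $\phi$ intertwines $f_*$ with post-composition by $f$, whence $\phi(f_*[\text{Schur}])=f\circ\phi([\text{Schur}])$. Since the Schur cover was defined precisely so that $\phi([\text{Schur}])=\operatorname{id}_{H_2(G,\mathbb{Z})}$ via \eqref{eq:cong}, the right-hand side is $f=\phi([a])$. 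Thus the class of $E_f$ and $[a]$ have the same image under the Kronecker map.

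The main obstacle is the final step of upgrading ``equal Kronecker image'' to ``equal class''. By \eqref{eq:uct} the kernel of $\phi$ is $\operatorname{Ext}^1_{\mathbb{Z}}(G^{\text{ab}},\mathbb{Z})$, so the two classes coincide once this Ext group vanishes. Under the standing hypothesis of Theorem \ref{thm:manyclasses} that $G^{\text{ab}}$ is free abelian, $\operatorname{Ext}^1_{\mathbb{Z}}(G^{\text{ab}},\mathbb{Z})=0$ and $\phi$ is an isomorphism, so $f_*[\text{Schur}]=[a]$ on the nose and the bottom row genuinely corresponds to $[a]$. I expect everything except this naturality-and-injectivity bookkeeping to be formal: the pushout construction and the two commuting squares are automatic.
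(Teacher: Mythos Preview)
Your argument follows the same route as the paper: define $f$ via the Kronecker pairing and form the pushout of the Schur cover along $f$ (the paper simply cites Brown, Chapter~IV, Exercise~3.1(b) rather than writing out $(\mathbb{Z}\times E)/N$ explicitly). Your final paragraph is a genuine refinement: the paper does not explicitly verify that the pushout's class equals $[a]$, and your observation that this step requires $\operatorname{Ext}^1_{\mathbb{Z}}(G^{\mathrm{ab}},\mathbb{Z})=0$---which follows from the free-abelian hypothesis of Theorem~\ref{thm:manyclasses} but not literally from ``unique Schur covering group'' alone---is correct and worth noting.
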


\begin{proof}
    The map \(f\) is given by the Kronecker pairing (i.e. this is \(\phi([a])\) under the mapping (\ref{eq:kronecker})).
    The rest of the diagram is the standard pushout construction of \(H_2(G,\mathbb{Z})\) along the maps \(f\) and \(H_2(G,\mathbb{Z})\to E\) from the stem extension of the Schur covering.
    The group \(E_f\) can be taken to be the largest quotient of \(\mathbb{Z}\rtimes E\) such that the left square of the diagram commutes (\cite{brown}, Chapter IV, Exercise 3.1(b)).
    Note that the bottom row is unique up to equivalence of central extensions, as in (\ref{eq:equiv}).
\end{proof}

This was the main reason we constructed the Schur cover: to relate the Schur covering group \(E\) to the central extension groups \(E_f\) via (\ref{eq:pushout}).
Observe that in general, the corresponding map \(\phi_f\) is not necessarily a surjective homomorphism.
We need more conditions to realise \(E_f\) as a quotient of \(E\).

\begin{definition}[Rich class]
    We say that a cohomology class \([a]\in H^2(G,\mathbb{Z})\) is \emph{rich} if the corresponding Kronecker pairing map
    \[
    f\colon H_2(G,\mathbb{Z})\to\mathbb{Z}
    \]
    coming from (\ref{eq:kronecker}) is surjective.
\end{definition}

Note that even between finitely generated groups, different homomorphisms can have the same kernels.

\begin{definition}[Distinguished classes]
    We say that two cohomology classes \([a],[b]\in H^2(G,\mathbb{Z})\) are \emph{distinguished} if their corresponding two Kronecker pairing maps
    \[
    f_{[a]}\colon H_2(G,\mathbb{Z})\to\mathbb{Z}
    \]
    and
    \[
    f_{[b]}\colon H_2(G,\mathbb{Z})\to\mathbb{Z}
    \]
    have different kernels.
    We say that a set of cohomology classes is distinguished, if the classes in the set are all pairwise distinguished.
\end{definition}

The final ingredient is finite generation to distinguish quotients coming from many normal subgroups.

\begin{lemma}[\cite{delaharpe}, Lemma 42]\label{lem:quotients}
    Suppose that \(\mathcal{S}\) is a set of normal subgroups of a finitely generated group \(G\), and \(\mathcal{Q}\) is the set of corresponding quotient groups.
    Then \(\mathcal{S}\) has uncountable cardinality if and only if there are uncountably many isomorphism classes of groups represented in \(\mathcal{Q}\).
\end{lemma}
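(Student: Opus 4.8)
The plan is to establish the two implications of the biconditional separately, with essentially all of the content residing in the forward direction (uncountable \(\mathcal{S}\) forces uncountably many isomorphism types); the converse is a triviality.

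For the backward direction, note that by definition of \(\mathcal{Q}\) the assignment \(N\mapsto[G/N]\), sending each normal subgroup to the isomorphism class of its quotient, is a surjection from \(\mathcal{S}\) onto the set of isomorphism classes represented in \(\mathcal{Q}\). Since a set admitting a surjection onto an uncountable set must itself be uncountable, the presence of uncountably many isomorphism classes immediately forces \(\mathcal{S}\) to be uncountable.

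For the forward direction, I would argue by contraposition: assuming that only countably many isomorphism classes appear in \(\mathcal{Q}\), I aim to conclude that \(\mathcal{S}\) is countable. The key step is to show that for each fixed isomorphism class \([Q]\), there are only countably many \(N\in\mathcal{S}\) with \(G/N\cong Q\). To see this, fix a finite generating set \(S\) of \(G\). Since \(G\) is finitely generated, every quotient \(G/N\) is finitely generated and hence countable, so in particular \(Q\) is countable. A homomorphism \(G\to Q\) is determined by the images of the finitely many generators in \(S\), so \(\Hom(G,Q)\subseteq Q^{S}\) gives \(|\Hom(G,Q)|\leq|Q|^{|S|}=\aleph_0\). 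Now each \(N\) with \(G/N\cong Q\) arises as the kernel of a surjective homomorphism \(G\to Q\), obtained by composing the quotient map \(G\to G/N\) with any chosen isomorphism \(G/N\cong Q\); since \(N\) is recovered as the kernel of this map, the assignment yields an injection of \(\{N\in\mathcal{S}:G/N\cong Q\}\) into the countable set \(\Hom(G,Q)\), so this collection of normal subgroups is countable. Combining, \(\mathcal{S}\) is a countable union, indexed by the countably many isomorphism classes, of countable sets, and is therefore countable, contradicting the hypothesis that \(\mathcal{S}\) is uncountable.

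The only real subtlety, and hence the main step to get right, is the counting argument: recognising that finite generation of \(G\) simultaneously forces each quotient \(Q\) to be countable and bounds \(|\Hom(G,Q)|\) by \(|Q|^{|S|}\), which is countable precisely because \(S\) is finite and \(Q\) is countable. Once this cardinality bound is in place, everything else is routine bookkeeping with countable unions.
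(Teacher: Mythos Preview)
Your proof is correct and follows essentially the same approach as the paper: both directions match, and for the forward implication you and the paper use the same counting argument---finite generation of \(G\) makes each quotient countable, whence \(\Hom(G,Q)\) is countable, so each isomorphism type accounts for only countably many kernels. The only cosmetic issue is that you announce a contrapositive but then conclude with ``contradicting the hypothesis that \(\mathcal{S}\) is uncountable''; dropping that final clause (you have already deduced \(\mathcal{S}\) countable, which is the contrapositive) would clean this up.
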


\begin{proof}
    If there are uncountably many groups in \(\mathcal{Q}\), then they must have come from uncountably many quotient maps with uncountably many different set-wise kernels, thus \(\mathcal{S}\) must be uncountable.

    For the converse, suppose that \(\mathcal{S}\) is uncountable and consider each quotient group \(Q\in\mathcal{Q}\).
    Since \(G\) is finitely generated, each \(Q\) must also be finitely generated.
    Furthermore, \(G\) and each \(Q\) are countable.
    Thus there are at most countably many homomorphisms from \(G\) onto the isomorphism class of \(Q\), corresponding to at most countably many set-wise kernels.
    This means that at most countably many of the normal subgroups in \(\mathcal{S}\) give quotients isomorphic to \(Q\).
    It follows that \(\mathcal{Q}\) must contain uncountably many pairwise non-isomorphic groups.
\end{proof}

We are now ready to prove Theorem \ref{thm:manyclasses}.

\begin{proof}[Proof of Theorem \ref{thm:manyclasses}]
    Let \(\mathcal{S}\) be an uncountable set of distinguished classes from \(H^2(G,\mathbb{Z})\) which are all rich and weakly bounded.
    Because \(G^{\text{ab}}\) is free abelian, \(G\) will have a unique Schur covering group \(E\).
    Given a particular class \([a]\) from \(\mathcal{S}\), we apply Proposition \ref{prop:pushout} and consider the corresponding commutative diagram (\ref{eq:pushout}).
    Since \([a]\) is rich, the corresponding map \(f\) will be surjective.
    Then by the 4-lemma, the map \(\phi_f\) is surjective and \(E_f\) is the quotient of \(E\) by the kernel of \(\phi_f\).
    The kernel of \(\phi_f\) is precisely the image under the map \(H_2(G,\mathbb{Z})\to E\) of the kernel of \(f\).
    Since the classes in \(\mathcal{S}\) are distinguished, these kernels will all be pairwise distinct.
    By Lemma \ref{lem:stem} and Corollary \ref{cor:fingen}, \(E\) is finitely generated because \(G\) is finitely generated.
    Therefore by Lemma \ref{lem:quotients}, among the corresponding central extension groups \(E_f\) of \(G\) by \(\mathbb{Z}\), there will be uncountably many pairwise non-isomorphic groups.
    Finally, as all of the classes are weakly bounded, by Proposition \ref{prop:qi} these will all be quasi-isometric to each other and to \(G\times\mathbb{Z}\).
\end{proof}

We believe that some of the conditions in Theorem \ref{thm:manyclasses} could be relaxed.
For example, if \(G\) does not have a unique Schur covering group, then the map
\[
H^2(G,\mathbb{Z})\to\operatorname{Hom}(H_2(G,\mathbb{Z}),\mathbb{Z})
\]
may have a kernel, and different 2-cohomology classes may map to the same central extension under the above construction.
Nonetheless, we believe that in certain special cases, with enough care, one may still be able to build uncountably many isomorphism classes of extension groups \(E_f\) in (\ref{eq:pushout}).
Extensions of \(G\) by groups other than \(\mathbb{Z}\) should work too.
However, we do not pursue this here.

\section{Uncountably many rich cohomology classes}

In order to prove Theorem \ref{thm:largeclass}, it now suffices to provide a torsion-free group \(G\) that satisfies the conditions of Theorem \ref{thm:manyclasses}.
By Lemma \ref{lem:torsionfree}, the resulting uncountably many quasi-isometric groups will be all torsion-free.

\begin{proof}[Proof of Theorem \ref{thm:largeclass}]
For this purpose, we can actually use the group \(G\) from \cite{frigeriosisto} which was shown to have weakly bounded but not bounded classes.
This is given by the infinite presentation
\[
\left<a_1,a_2,a_3,a_4,t_1,t_2,t_3,t_4\ \middle|\ r_i : i\in\mathbb{N}_0\right>,
\]
where the relators are given by
\[
r_i=[t^i_1a_1t_1^{-i},t^i_2a_2t_2^{-i}][t^i_3a_3t_3^{-i},t^i_4a_4t_4^{-i}].
\]
By definition, this is finitely generated.
By (\cite{frigeriosisto}, Lemma 4.2), this group is \(C'(1/7)\) small-cancellation and no relation is a proper power.
This means it is torsion-free, and the presentation complex \(X\) is a classifying space for \(G\).
This can then be used to compute the homology and cohomology of \(G\).

By (\cite{frigeriosisto}, Proposition 4.3), there is an isomorphism
\begin{equation}\label{eq:sequences}
\psi\colon H^2(G,\mathbb{Z})\cong\mathbb{Z}^{\mathbb{N}}.
\end{equation}
This comes from the following.

Using similar notation to \cite{frigeriosisto}, we have 2-cells \(c_i\) in \(X\) corresponding to the relations \(r_i\).
These define classes \([c_i]\in H_2(G,\mathbb{Z})\).
For every \(\alpha\in H^2(G,\mathbb{Z})\) one can now apply the Kronecker pairing to get
\[
\alpha_i := \left<\alpha,[c_i]\right>.
\]
The isomorphism \(\psi\) in (\ref{eq:sequences}) is then given by
\[
\psi(\alpha)=(\alpha_0,\alpha_1,\dots).
\]
Thus we can think of classes in \(H^2(G,\mathbb{Z})\) as integer sequences.
By (\cite{frigeriosisto}, Theorem 4.10), any bounded sequence will correspond to a weakly bounded class (note that they have a milder condition needed for the corresponding class to be weakly bounded, but the sequence being bounded is sufficient).

Consider the set \(\mathcal{S}\) of all possible integer sequences with values in \(\{0,1\}\), which are not the all-zeroes sequence.

The cardinality of \(\mathcal{S}\) is uncoubtable, thus \(\psi^{-1}(\mathcal{S})\) consists of uncountably many classes in \(H^2(G,\mathbb{Z})\).

Each sequence in \(\mathcal{S}\) is bounded, hence \(\psi^{-1}(\mathcal{S})\) consists of weakly bounded classes.

Since we have removed the all-zeroes sequence, every sequence in \(\mathcal{S}\) contains a 1.
This means that by construction, the image of the Kronecker pairing map of every element in \(\psi^{-1}(\mathcal{S})\) contains the value 1.
Therefore it surjects onto \(\mathbb{Z}\), and every class in \(\psi^{-1}(\mathcal{S})\) is rich.

Finally, each pair of sequences \(\mathcal{A},\mathcal{B}\) in \(\mathcal{S}\) must differ in at least one place.
This means the Kronecker pairing evaluations of \(\psi^{-1}(\mathcal{A})\) and \(\psi^{-1}(\mathcal{B})\) differ on some class \([c_i]\in H_2(G,\mathbb{Z})\).
Since these two distinct values will be necessarily 0 and 1, this means that they are distinguished: one will contain \([c_i]\) in its kernel, and the other will not.
This applies to any pair of sequences, thus \(\psi^{-1}(\mathcal{S})\) consists of distinguished classes.

\end{proof}

\bibliographystyle{alpha}
\bibliography{references}

\end{document}